\newtheorem{proposition}{Proposition}[section]
\newtheorem{theorem}[proposition]{Theorem}
\theoremstyle{definition}
\newtheorem{definition}[proposition]{Definition}
\newtheorem{example}[proposition]{Example}
\newtheorem{examples}[proposition]{Examples}
\newtheorem{remark}[proposition]{Remark}
\newcommand{\thlabel}[1]{\label{th:#1}}
\newcommand{\thref}[1]{Theorem~\ref{th:#1}}
\newcommand{\selabel}[1]{\label{se:#1}}
\newcommand{\seref}[1]{Section~\ref{se:#1}}
\newcommand{\exlabel}[1]{\label{ex:#1}}
\newcommand{\exref}[1]{Example~\ref{ex:#1}}
\newcommand{\delabel}[1]{\label{de:#1}}
\newcommand{\deref}[1]{Definition~\ref{de:#1}}
\newcommand{\eqlabel}[1]{\label{eq:#1}}
\newcommand{\equref}[1]{(\ref{eq:#1})}
\def\RR{{\mathbb R}}
\newcommand{\Cc}{\mathcal{C}}
\def\*C{{}^*\hspace*{-1pt}{\Cc}}
\def\text#1{{\rm {\rm #1}}}
\begin{document}

\title[The factorization problem for Jordan algebras. Applications]
{The factorization problem for Jordan algebras. Applications}

\author{A. L. Agore}
\address{Simion Stoilow Institute of Mathematics of the Romanian Academy, P.O. Box 1-764, 014700 Bucharest, Romania and Vrije Universiteit Brussel, Pleinlaan 2, B-1050 Brussels, Belgium}
\email{ana.agore@vub.be and ana.agore@gmail.com}

\author{G. Militaru}
\address{Faculty of Mathematics and Computer Science, University of Bucharest, Str.
Academiei 14, RO-010014 Bucharest 1, Romania and Simion Stoilow Institute of Mathematics of the Romanian Academy, P.O. Box 1-764, 014700 Bucharest, Romania}
\email{gigel.militaru@fmi.unibuc.ro and gigel.militaru@gmail.com}
\subjclass[2010]{16T10, 16T05, 16S40}

\thanks{This work was supported by a grant of the Ministry of Research, Innovation and Digitization, CNCS/CCCDI--UEFISCDI, project number
PN-III-P4-ID-PCE-2020-0458, within PNCDI III}

\subjclass[2020]{17C10, 17C50, 17C55} \keywords{Matched pair and bicrossed product of Jordan algebras; The factorization problem.}

\begin{abstract}
We investigate the factorization problem as well as the classifying complements problem in the setting of Jordan algebras. Matched pairs of Jordan algebras and the corresponding bicrossed products are introduced. It is shown that any Jordan algebra which factorizes through two given Jordan algebras is isomorphic to a bicrossed product associated to a certain matched pair between the same two Jordan algebras. Furthermore, a new type of deformation of a Jordan algebra is proposed as the main step towards solving the classifying complements problem.
\end{abstract}

\maketitle

\section*{Introduction}
The factorization problem is an old and notoriously difficult problem which stems in group theory. A weaker version of the factorization problem was first considered in \cite{Ma, Or} where groups $G$ which admit two subgroups $A$ and $B$ such that $G = AB$, are studied under the name of permutable groups. If we assume, in addition, that $A$ and $B$ have trivial intersection then we say that $G$ \emph{factorizes} through $A$ and $B$. The factorization problem for groups asks for the description and classification of all groups which factorize through two given groups. As explained in \cite{abm1, acim}, despite its elementary statement, the factorization problem is far from being an easy question. An important turning point for the factorization problem was the introduction of the matched pairs of groups \cite{Tak} which consist of two groups that act on each other in a compatible way. To each matched pair of groups one can associate a so-called \emph{bicrossed product}, i.e. a group structure on the direct product of the underlying sets constructed from the two actions. This leads to a new and more computational approach to the factorization problem. Consequently, the factorization problem comes down to finding all matched pairs between two given groups and classifying the corresponding bicrossed products. Therefore, the same strategy relying on matched pairs and bicrossed products was used to approach the factorization problem for various mathematical objects such as: (co)algebras \cite{Tom, CIMZ, cap}, Lie
algebras and Lie groups \cite{Y, majid3}, Leibniz algebras \cite{am-2013b}, Hopf algebras \cite{majid}, fusion categories \cite{Gk} and so on. Furthermore, this new approach has also the advantage of opening the way to new classification methods as evidenced in \cite{a, abm1, an, K} for Hopf algebras. Since their introduction by P. Jordan in $1933$, Jordan algebras have appeared in various different fields of mathematics and mathematical physics such us the theory of superstrings, supersymmetry, projective geometry, Lie algebras and algebraic groups, representation theory or functional analysis \cite{Mc}. In the setting of Jordan algebras, the factorization problem can be stated as follows:

\textbf{Factorization problem.} \textit{Let $A$ and $V$ be two
given Jordan algebras. Describe and classify up to an isomorphism
all Jordan algebras $E$ that factorize through $A$ and $V$, i.e.
$E$ contains $A$ and $V$ as Jordan subalgebras such that $E = A +
V$ and $A \cap V = \{0\}$.}

Another closely-related problem which we will consider here was first
introduced in \cite{am-2013} at the level of Lie/Hopf algebras as
a converse of the factorization problem. Corresponding theories have been developed for associative algebras \cite{a2} or groups \cite{am-2015} where an important connection with the problem of classifying all groups of a given finite order was highlighted. For Jordan algebras it comes down to the following:

\textbf{Classifying complements problem.} \textit{Let $A \subseteq
E$ be a Jordan subalgebra of a Jordan algebra $E$. If a complement
of $A$ in $E$ exists (i.e. a Jordan subalgebra $V\subseteq E$ such
that $E = A + V$ and $A \cap V = \{0\}$), describe and classify up
to an isomorphism all others complements of $A$ in $E$.}

The paper is devoted to the two problems listed above and is organized as follows: in \seref{prel} we set the notation and recall some
basic definitions and properties of Jordan algebras. \seref{cazurispeciale} focuses on the factorization problem for Jordan algebras. \deref{mpmL} introduces matched pairs of Jordan algebras and \thref{bicross} connects them to the bicrossed product construction. These are the
Jordan algebra counterparts of similar constructions performed for groups \cite{Tak} or Lie algebras \cite{LW, majid}. \thref{bicromlver} proves that the
bicrossed product of two Jordan algebras is the object responsible
for the factorization problem and is the Jordan algebra version of
\cite[Theorem 3.9]{LW}. As the examples in this section show, most low dimensional Jordan algebras can be written as bicrossed products of certain Jordan subalgebras. This suggests that achieving the classification of bicrossed products can lead to a successful strategy for classifying Jordan algebras. To this end, a general classification result for bicrossed products is presented in \thref{morfismebicross}.

The last section of the paper deals with the classifying complements problem for Jordan algebras following the general strategy we developed previously in \cite{a2, am-2013, am-2015} for groups and associative/Lie/Hopf algebras. Consequently, a new type of \emph{deformation} of a Jordan algebra arising from a matched pair is introduced in \thref{deforJ}. More precisely, these deformations involve a certain linear map $r \colon B \to A$ associated to the canonical matched pair $(A,\, B,\, \triangleleft,\, \triangleright)$, called a deformation map. It is shown that all $A$-complements of a Jordan algebra $E$ are obtained from a given complement $B$ by this new kind of deformation. More precisely, for each deformation map $r \colon B \to A$ we can define a new $A$-complement denoted by $B_{r}$ and, conversely, for any $A$-complement $B'$ we can find a deformation map of the canonical matched pair $(A,\, B,\, \triangleleft,\, \triangleright)$ such that $B' \cong B_{r}$. Relevant examples are presented. In particular, it is proved that $4$ out of the $6$ isomorphism types of real Jordan algebras of dimension $2$ (\cite[Theorem 1]{anc}) appear as deformations of a given $2$-dimensional Jordan algebra. This further substantiate the importance of these objects for the classification problem of Jordan algebras.

\section{Preliminaries}\selabel{prel}
All vector spaces, (bi)linear maps, tensor products are over a 
field $k$ of characteristic $\neq 2$. A \emph{Jordan algebra} is a vector space $A$ together with a
bilinear map $\cdot : A \times A \to A$, called multiplication, which is commutative and satisfies the Jordan identity, i.e. for any $a$, $b\in A$ we have:
\begin{equation*}
a\cdot b = b\cdot a, \qquad (a^2 \cdot b) \cdot a = a^2 \cdot (b
\cdot a)
\end{equation*}
A Jordan algebra $A$ with trivial multiplication, i.e. $a \cdot b := 0$ for all $a$, $b\in A$, will be called \emph{abelian} and a vector space $V$ endowed with the abelian multiplication will be denoted by $V_0$. Throughout, when there is no fear of confusion, we will denote the multiplication of a Jordan algebra just by juxtaposition. 
As in the case of Lie algebras, any associative algebra induces a Jordan algebra. More precisely, given an associative algebra $A$ one can define a Jordan algebra structure on the underlying vector space by $x \cdot y :=
2^{-1} (xy + yx)$, for all $x$, $y\in A$. The automorphism group of the Jordan algebra $A$ will be denoted by ${\rm Aut}_{\rm
J}(A)$. We recall from \cite{am2022} the following concept: 

\begin{definition}\delabel{moduleJ}
A \emph{right action} of a Jordan algebra $A$ on a vector space
$M$ is a bilinear map $\triangleleft : M \times A \to M$ such that
for any $a\in A$ and $x \in M$ we have:
\begin{eqnarray} \eqlabel{actiunedr}
(x \triangleleft a^2) \triangleleft a = (x \triangleleft a)
\triangleleft a^2
\end{eqnarray}
Similarly, a \emph{left action} of $A$ on $M$ is a bilinear map
$\triangleright : A \times M \to M$ such that for any $a\in A$ and
$x \in M$ we have:
\begin{eqnarray} \eqlabel{actiunest}
a \triangleright  (a^2 \triangleright x) = a^2 \triangleright (a
\triangleright x)
\end{eqnarray}
\end{definition}

The canonical maps $\triangleright : A \times A \to A$ and
$\triangleright : A \times A^* \to A^*$ given for any $a$, $b\in
A$ and $a^* \in A^*$ by:
\begin{equation} \eqlabel{calduramare}
a \triangleright b := a \cdot b, \qquad (a \triangleright a^*) (b)
:= a^* (a\cdot b)
\end{equation}
are left actions of $A$ on $A$ and $A^* = {\rm Hom}_k
\, (A, \, k) $, respectively.

\begin{definition}\delabel{moduleJ}
A \emph{Jordan bimodule} \cite{CDDV} over a Jordan algebra $A$ is a vector space
$M$ together with two linear maps:
\begin{eqnarray*}
&& M \otimes A \to M, \,\,\, x \otimes a \mapsto xa\\
&& A \otimes M \to M, \,\,\, a \otimes x \mapsto ax
\end{eqnarray*}
subject to the following compatibilities:
\begin{eqnarray}
&& xa = ax\\\
&& a(a^{2}x) = a^{2}(xa)\eqlabel{bim2}\\
&& (a^{2}b)x - a^{2}(bx) = 2 \bigl[(ab)(ax) - a(b(ax))\bigl]\eqlabel{bim3}
\end{eqnarray}
\end{definition}

For all unexplained notions pertaining to Jordan algebra theory we refer the reader to \cite{jacb, Ko, Mc}. 

\section{Matched pairs and bicrossed products of Jordan algebras}\selabel{cazurispeciale}

In this section we introduce matched pairs of Jordan algebras and the corresponding bicrossed product in order to approach the factorization problem. Part of our results regarding matched pairs will be derived as a special case from the more general theory developed in \cite[Section 2]{am2022} in connection to the extending structures problem.

\begin{definition}\delabel{mpmL}
Let $A$ and $V$ be two Jordan algebras. Then the quadruple $(A, V, \triangleleft, \, \triangleright)$ is
called a \emph{matched pair of Jordan algebras} if $\triangleleft
: V\times A \to V$ is a right action of the Jordan algebra $A$ on
$V$, $\triangleright : V \times A \to A$ is a left action of the
Jordan algebra $V$ on $A$ and the following compatibility conditions hold for all $a$, $b \in A$
and $x$, $y \in V$:
\begin{eqnarray*}
&& \hspace*{-8mm} {\rm (MP1)}\,\, a \, (x \triangleright a^2) + (x
\triangleleft a^2 )\triangleright a = a^2 \, (x \triangleright a) + (x \triangleleft a) \triangleright a^{2};\\
&& \hspace*{-8mm}{\rm (MP2)}\,\, x \triangleleft (x^2 \triangleright a) + (x^2
\triangleleft a)  \, x = x^2 \triangleleft (x
\triangleright a) + x^2  \, (x \triangleleft a);\\
&& \hspace*{-8mm} {\rm (MP3)}\,\, 2\bigl[\bigl((x \triangleleft a) \triangleleft b\bigl) \triangleleft \,a + x \triangleleft \bigl((x \triangleright a) b\bigl) + x \triangleleft \bigl((x \triangleleft a) \triangleright b\bigl) + \bigl((x \triangleleft a) \triangleleft b\bigl)x \bigl] + (x^{2} \triangleleft b) \triangleleft a +\\
&& \,\,\,\,\,\, x \triangleleft (ba^{2}) = 2\bigl[(x \triangleleft a) \triangleleft ba + (x \triangleleft a) \triangleleft (x \triangleright b) + (x \triangleleft b) \triangleleft (x \triangleright a) + (x \triangleleft a)(x \triangleleft b)\bigl] +\\
&& \,\,\,\,\,\, x^{2} \triangleleft ba + (x \triangleleft b) \triangleleft a^{2};\\
&& \hspace*{-8mm} {\rm (MP4)}\,\, 2\bigl[y \triangleright (x \triangleright a) + x \triangleright \bigl(y \triangleright (x \triangleright  a)\bigl) + \bigl(y \triangleleft (x \triangleright a)\bigl) \triangleright\, a + \bigl((x \triangleleft a) y\bigl) \triangleright a\bigl] + (x^{2}y) \triangleright a +\\
&& \,\,\,\,\,\, x \triangleright (y \triangleright a^{2}) = 2\bigl[(y \triangleright a)(x \triangleright a) + xy \triangleright (x \triangleright a) + (y \triangleleft a) \triangleright (x \triangleright a) + (x \triangleleft a) \triangleright (y \triangleright a)\bigl] +\\
&& \,\,\,\,\,\, x^{2} \triangleright (y \triangleright a) + xy \triangleright a^{2};\\
&& \hspace*{-8mm} {\rm (MP5)}\,\, 2\bigl[\bigl(y \triangleleft (x \triangleright a)\bigl) \triangleleft \,a + \bigl((x \triangleleft a)y\bigl) \triangleleft a + x \triangleleft \bigl(y \triangleright (x \triangleright a)\bigl) + \bigl(y \triangleleft (x \triangleright a)\bigl)x + \bigl((x \triangleleft a)y\bigl)x\bigl] +\\
&& \,\,\,\,\,\, x^{2}y \triangleleft a + x \triangleleft (y \triangleright a^{2}) + (y \triangleleft a^{2}) x = 2\bigl[(x \triangleleft a) \triangleleft (y \triangleright a) + (y \triangleleft a) \triangleleft (x \triangleright a) + \\
&& \,\,\,\,\,\, xy \triangleleft (x \triangleright a) + (x \triangleleft a)xy + (x \triangleleft a)(y \triangleleft  a)\bigl] + x^{2} \triangleleft (y \triangleright a) + xy \triangleleft a^{2} + x^{2}(y \triangleleft a); \\
&& \hspace*{-8mm} {\rm (MP6)}\,\, 2\bigl[\bigl((x \triangleright a)b\bigl)a + \bigl((x \triangleleft a) \triangleright b\bigl)a + \bigl((x \triangleleft a) \triangleleft b\bigl) \triangleright a + x \triangleright \bigl((x \triangleright a)b\bigl) + x \triangleright \bigl((x \triangleleft a) \triangleright b\bigl)\bigl] +\\
&& \,\,\,\,\,\, (x^{2} \triangleright b)a + (x^{2} \triangleleft b)  \triangleright a + x  \triangleright (a^{2}b) = 2 \bigl[(x \triangleright a)(ab) + (x \triangleleft a) \triangleright (ba) + (x  \triangleright a) (x  \triangleright b) +\\
&& \,\,\,\,\,\, (x  \triangleleft b) \triangleright (x \triangleright a) + (x \triangleleft a) \triangleright (x \triangleright b)\bigl] + x^{2} \triangleright (ba) + (x \triangleleft b) \triangleright a^{2} +a^{2}(x \triangleright b).
\end{eqnarray*}
\end{definition}

We denote by $ A \bowtie V$ the
vector space $A \, \times V$ together with the bilinear map $
\circ : (A \times V) \times (A \times V) \to A \times V$ defined
by:
\begin{equation}\eqlabel{bracbicross}
(a, x) \circ (b, y) := \bigl( a b + x \triangleright b
+ y\triangleright a , \,\,  x\triangleleft b + y\triangleleft a +
x y  \bigl)
\end{equation}

The next result provides a motivation for the matched pair axioms as introduced in \deref{mpmL} and is the Jordan algebra version of \cite[Theorem 4.1]{majid}:

\begin{theorem}\thlabel{bicross}
Let $A$ and $V$ be two Jordan algebras and $\triangleleft
\colon V\times A \to V$, $\triangleright \colon V \times A \to A$ two bilinear maps. The multiplication defined in \equref{bracbicross} is a Jordan algebra structure on $A \times V$ if and only if $(A, V, \triangleleft, \, \triangleright)$ is a matched pair of Jordan algebras. In this case, the Jordan algebra $A
\bowtie V$ will be called the \emph{bicrossed product} associated to the matched pair $(A, V, \triangleleft, \, \triangleright)$.
\end{theorem}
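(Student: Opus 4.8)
The plan is to first dispose of commutativity and then reduce everything to the Jordan identity. Since $A$ and $V$ are commutative, formula \equref{bracbicross} gives at once $(a,x)\circ(b,y)=(b,y)\circ(a,x)$ (use $ab=ba$ in $A$ and $xy=yx$ in $V$), so $\circ$ is always commutative and no condition is needed for this. The entire content of the theorem is therefore the Jordan identity. First I would record the square
\[
(a,x)\circ(a,x)=\bigl(a^{2}+2\,(x\triangleright a),\; x^{2}+2\,(x\triangleleft a)\bigr),
\]
and introduce
\[
f\bigl((a,x),(b,y)\bigr):=\bigl(((a,x)\circ(a,x))\circ(b,y)\bigr)\circ(a,x)-((a,x)\circ(a,x))\circ\bigl((b,y)\circ(a,x)\bigr),
\]
so that $\circ$ is a Jordan product precisely when $f\equiv 0$, i.e. when both the $A$- and the $V$-component of $f$ vanish for all arguments.

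The key structural remark is that $f$ is $k$-linear in $(b,y)$ and a homogeneous cubic form in $(a,x)$. Consequently I would split $(b,y)=(b,0)+(0,y)$ and $(a,x)=(a,0)+(0,x)$ and collect $f$ into its multihomogeneous pieces. Setting
\[
M\bigl((a,x),-\bigr):=f\bigl((a,0)+(0,x),-\bigr)-f\bigl((a,0),-\bigr)-f\bigl((0,x),-\bigr)
\]
for the genuinely mixed part (the terms of bidegree $(2,1)$ and $(1,2)$ in $(a,x)$), linearity and multihomogeneity show that $f\equiv 0$ if and only if the six pieces
\[
f\bigl((a,0),(b,0)\bigr),\quad f\bigl((0,x),(0,y)\bigr),\quad f\bigl((a,0),(0,y)\bigr),\quad f\bigl((0,x),(b,0)\bigr),\quad M\bigl(-,(b,0)\bigr),\quad M\bigl(-,(0,y)\bigr)
\]
vanish identically. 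The first two vanish automatically, being nothing but the Jordan identities of $A$ and of $V$.

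It then remains to expand the $A$- and $V$-components of the four nontrivial pieces. A direct computation shows that the vanishing of $f\bigl((a,0),(0,y)\bigr)$ amounts to (MP1) on the $A$-side and to the right action axiom \equref{actiunedr} on the $V$-side; that of $f\bigl((0,x),(b,0)\bigr)$ amounts to the left action axiom \equref{actiunest} on the $A$-side and to (MP2) on the $V$-side; that of $M\bigl(-,(b,0)\bigr)$ to (MP6) and (MP3); and that of $M\bigl(-,(0,y)\bigr)$ to (MP4) and (MP5). This yields both implications simultaneously: for the forward direction one simply specializes the identity $f\equiv 0$ to these arguments, while for the converse one notes that once the eight axioms hold all six pieces vanish, hence so does $f$ for arbitrary $(a,x)$ and $(b,y)$.

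The only real difficulty is the volume of computation in the two mixed pieces $M\bigl(-,(b,0)\bigr)$ and $M\bigl(-,(0,y)\bigr)$: each unfolds into a long sum of terms of bidegree $(2,1)$ and $(1,2)$ in $(a,x)$, and identifying these with (MP3)--(MP6) requires careful bookkeeping, in particular of the factors of $2$ produced by the square $(a,x)\circ(a,x)$ — this is where $\Char k\neq 2$ enters. A more conceptual alternative would be to derive the statement from the general unified product of \cite[Section 2]{am2022} by specializing its cocycle datum to zero, which is exactly the matched-pair case; the computation above is essentially the explicit form of that specialization.
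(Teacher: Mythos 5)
Your proposal is correct, but it takes a genuinely different route from the paper's. The paper does not perform the verification at all: it invokes the unified-product machinery of \cite{am2022}, observing that a Jordan extending datum with cocycle $f:=0$ is precisely a matched pair and that the unified product then collapses to \equref{bracbicross}, so the theorem follows from \cite[Theorem 2.4]{am2022} and \cite[Example 2.6]{am2022} --- exactly the ``conceptual alternative'' you mention in your last sentence. What you do instead is the direct computation the paper explicitly declines as ``rather long and laborious,'' and your organization of it is sound: commutativity is indeed automatic; the Jordan-identity defect $f$ is linear in $(b,y)$ and cubic in $(a,x)$, so $f\equiv 0$ is equivalent to the vanishing of your six pieces (by linearity in the second argument and the definition of $M$ as the cross-term difference); and your matching of pieces to axioms is the correct one --- expanding, one finds that the $A$- and $V$-components of $f\bigl((a,0),(0,y)\bigr)$ are exactly (MP1) and the right-action axiom \equref{actiunedr}, those of $f\bigl((0,x),(b,0)\bigr)$ are \equref{actiunest} and (MP2), and those of $M\bigl(-,(b,0)\bigr)$ and $M\bigl(-,(0,y)\bigr)$ are exactly (MP6)/(MP3) and (MP4)/(MP5). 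One point you implicitly get right and that deserves emphasis: you must \emph{not} split $M$ further into its bidegree-$(2,1)$ and $(1,2)$ parts, because (MP3)--(MP6) are not multihomogeneous; each is the full mixed component, so your $M$ is precisely the right object. As a by-product, your route detects a misprint in the paper: for the identification with $M\bigl(-,(0,y)\bigr)$ to work (and for homogeneity), the first term of (MP4) must be read as $\bigl(y \triangleright (x\triangleright a)\bigr)a$ rather than $y \triangleright (x\triangleright a)$. The trade-off is the usual one: the paper's proof is a three-line reduction that outsources all content to \cite{am2022}, while yours is self-contained and explains where each of the eight axioms comes from, at the cost of the mechanical bookkeeping you describe.
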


\begin{proof} Note that the proof can be performed in a direct manner by a rather long and laborious computation. Instead, we will apply the results from \cite[Section 2]{am2022}. Indeed, by \cite[Definition 2.2]{am2022}, a Jordan extending datum of $A$ through $V$ with $f := 0$ comes down precisely to the definition of a matched pair while the multiplication of the corresponding unified product reduces to the one in \equref{bracbicross}. The conclusion now follows from \cite[Theorem 2.4]{am2022} and \cite[Example  2.6]{am2022}.
\end{proof}

Our first examples of a bicrossed product are generic ones, namely the \emph{semidirect product of Jordan algebras} \cite{jacb} and the \emph{null split extension of a Jordan algebra by a bimodule} \cite[Section 2.2]{KOS}.

\begin{example} \exlabel{clifordJ}
$1.$ Let $A$, $V$ be two Jordan algebras and $\triangleright : V
\times A \to A$ a left action such that the following
compatibilities hold for all $a$, $b \in A$ and $x$, $y \in V$:
\begin{eqnarray*} \eqlabel{Lsemidirectpr}
&& \hspace*{-8mm}{\rm (L1)}\,\,\,\,\,\, a \, ( x \triangleright a^2) = a^2 \, (x \triangleright a)\\
&&  \hspace*{-8mm}{\rm (L2)}\,\,\,\,\,\, 2\bigl[y \triangleright (x \triangleright a) - (y \triangleright a)(x \triangleright a) + x \triangleright \bigl(y \triangleright (x \triangleright  a)\bigl) -xy \triangleright (x \triangleright a) \bigl]\\
&&\quad =  x^{2} \triangleright (y \triangleright a) -  (x^{2}y) \triangleright a +  (xy) \triangleright a^{2} -  x \triangleright (y \triangleright a^{2})\\
&& \hspace*{-8mm}{\rm (L3)}\,\,\,\,\,\,  2\bigl[\bigl((x \triangleright a)b\bigl)a - (x \triangleright a)(ab) + x \triangleright \bigl((x \triangleright a)b\bigl) -  (x  \triangleright a) (x  \triangleright b)\bigl]\\
&&\quad = x^{2} \triangleright (ba) - (x^{2} \triangleright b)a + a^{2}(x \triangleright b) - x \triangleright (a^{2}b)
\end{eqnarray*}
Then the direct product $A \times V$ together with the multiplication given as follows $a$, $b \in A$ and $x$, $y \in V$:
\begin{equation}\eqlabel{semiL}
(a, x) \circ (b, y) := \bigl( a b + x \triangleright b
+ y\triangleright a , \,\, x y  \bigl)
\end{equation}
is a Jordan algebra called the left semidirect product and will be denoted by $A \ltimes V$.

It can be easily seen that the left semidirect product can be recovered from \deref{mpmL} by considering a matched pair $(A, V, \triangleleft, \, \triangleright)$ with $\triangleleft := 0$. Then (MP2), (MP3) and (MP5) are trivially fulfilled while (MP1), (MP4) and (MP6) reduce to the compatibilities (L1), (L2) and (L3) respectively.

Similarly, the right semidirect product also appears as a special case of the bicrossed product. To this end, consider $A$, $V$ to be two Jordan algebras and $\triangleleft : V \times A \to V$ a right action such that the following compatibilities hold for all $a$, $b \in A$ and $x$, $y \in V$:
\begin{eqnarray*} \eqlabel{Rsemidirectpr}
&& \hspace*{-8mm}{\rm (R1)}\,\,\,\,\,\, x\, (x^2
\triangleleft a) = x^2  \, (x \triangleleft a)\\
&&  \hspace*{-8mm}{\rm (R2)}\,\,\,\,\,\,
2\bigl[\bigl((x \triangleleft a) \triangleleft b\bigl) \triangleleft \,a - (x \triangleleft a) \triangleleft ba + \bigl((x \triangleleft a) \triangleleft b\bigl)x - (x \triangleleft a)(x \triangleleft b)\bigl] \\
&&\quad =  x^{2} \triangleleft ba - (x^{2} \triangleleft b) \triangleleft a +  (x \triangleleft b) \triangleleft a^{2} - x \triangleleft (ba^{2})\\
&& \hspace*{-8mm}{\rm (R3)}\,\,\,\,\,\,  2\bigl[\bigl((x \triangleleft a)y\bigl) \triangleleft\, a - (x \triangleleft a)(y \triangleleft  a) + \bigl((x \triangleleft a)y\bigl)x - (x \triangleleft a)xy\bigl]\\
&&\quad = xy \triangleleft a^{2} - (y \triangleleft a^{2}) x + x^{2}(y \triangleleft a) - x^{2}y \triangleleft a
\end{eqnarray*}
Then the direct product $A \times V$ together with the multiplication given as follows $a$, $b \in A$ and $x$, $y \in V$:
\begin{equation}\eqlabel{semiR}
(a, x) \circ (b, y) := \bigl( a b, \,\, x\triangleleft b + y\triangleleft a + x y  \bigl)
\end{equation}
is a Jordan algebra called the right semidirect product and will be denoted by $A \rtimes V$.

The right semidirect product can be recovered from \deref{mpmL} by considering a matched pair $(A, V, \triangleleft, \, \triangleright)$ with $\triangleright := 0$. Then (MP1), (MP4) and (MP6) are trivially fulfilled while (MP2), (MP3) and (MP5) reduce to the compatibilities (R1), (R2) and (R3) respectively.

$2.$ Let $A$ be a Jordan algebra and $M$ a Jordan bimodule over $A$. We will see $M$ as an abelian Jordan algebra, i.e. $xy = 0$ for all $x$, $y \in M$. It is straightforward to see that $\triangleleft \colon M \otimes A \to M$ defined by $x \triangleleft a = xa$ is a right action of $A$ on $M$ satisfying $(R1)-(R3)$, where we denote by juxtaposition the bimodule structure on $M$. Indeed, to start with, $\triangleleft$ is a right action by virtue of \equref{bim2}. Furthermore, $(R1)$ and $(R3)$ hold trivially since we assumed $M$ to be an abelian Jordan algebra and by the same argument $(R2)$ follows from \equref{bim3}. The multiplication on the corresponding right semidirect product $A \ltimes M$ is given as follows for all $a$, $b \in A$ and $x$, $y \in M$ by:
\begin{eqnarray*}\eqlabel{semiR}
(a, x) \circ (b, y) := \bigl( a b, \,\, xb + ya  \bigl)
\end{eqnarray*}
which is precisely the null split extension of a Jordan algebra by a bimodule.
\end{example}

\begin{remark}
A straightforward computation shows that $V \cong \{0\} \times V$ is an ideal
of $A \rtimes V$, $A \cong A  \times \{0\}$ is a subalgebra of $A
\rtimes V$ and the canonical inclusion $i_A : A \to A \rtimes V$,
$i_A (a) = (a, 0)$ is a split monomorphism of Jordan algebras: its
retraction $\pi_A : A \rtimes V \to A$, $\pi_A (a, x) := a$, for
all $a\in A$ and $x\in V$ is a Jordan algebra map.

Conversely, the right semidirect product of Jordan algebras describes
split monomorphisms in the category: more precisely, if $i: A \to E$ is a split
monomorphism of Jordan algebras with splitting map $p: E \to A$, then $E \cong A \rtimes V$, where $V = {\rm ker} p$ is obviously a Jordan subalgebra of $E$.
Indeed, $\psi \colon A \rtimes V \to E$ defined by $\psi(a,\,x) = a+x$ for all $(a,\,x) \in A \rtimes V$ is a Jordan algebra isomorphism, where $A \rtimes V$  is the right semidirect product corresponding to the right action $\triangleleft : V \times A \to V$ given by $x \triangleleft a = xa$. To this end, for all $a$, $b \in A$ and $x$, $y \in V$ we have:
\begin{eqnarray*}
\psi(a,\,x) \psi(b,\, y) &=& (a+x)(b+y) = ab+ay+xb+xy = \psi(ab,\, xb+ya+xy)\\
&=& \psi \bigl((a,\,x) \circ (b,\,y)\bigl)
\end{eqnarray*}
\end{remark}

In what follows, when describing a matched pair of Jordan algebras, we only indicate the non-zero values of the two actions.

\begin{examples}\exlabel{mpbpex}

$1.$ \cite[Section 4.1]{Ka} lists the $7$ isomorphism classes of non-associative unitary Jordan algebras of dimension $4$. It can be easily seen that all those Jordan algebras can be written as bicrossed products of certain subalgebras. We include here one example; $J_{5}$ is the $4$-dimensional real Jordan algebra with multiplication table defined as follows:
\begin{center}
\begin{tabular} {l | c  c  c  c  c}
$J_{5}$ & $a$ & $b$ & $u$ & $v$\\
\hline $a$ & $a$ & $0$ & $\frac{1}{2}u$ & $v$\\
$b$ & $0$ & $b$ & $\frac{1}{2}u$ & $0$ \\
$u$ & $\frac{1}{2}u$ & $\frac{1}{2}u$ & $0$ & $0$\\
$v$ & $v$ & $0$ & $0$ & $0$ \\
\end{tabular}
\end{center}
$J_{5}$ is a right semidirect product between the $2$-dimensional Jordan algebras $A \,=\, <a,\, b ~|~ a^{2} = a,\, b^{2} = b, \, ab =0>$ and the abelian $2$-dimensional Jordan algebra $V$ generated by $u$ and $v$ corresponding to the right action $\triangleleft \colon V \times A \to V$ defined as follows:
$$
v \triangleleft a = v, \qquad u \triangleleft a = u \triangleleft b = \frac{1}{2} u.
$$

$2.$ Similarly, among the $19$ isomorphism classes of non-associative unitary Jordan algebras of dimension $5$ listed in \cite[Section 4.2]{Ka} there is only one which can not be written as a bicrossed product, namely $J_{3}$. We include below two examples: the Jordan algebras denoted in \cite{Ka} by $J_{7}$ and $J_{17}$. $J_{7}$ is the $5$-dimensional real Jordan algebra with multiplication table defined as follows:

\begin{center}
\begin{tabular} {l | c  c  c  c  c  c}
$J_{7}$ & $a$ & $b$ & $c$ & $u$ & $v$\\
\hline $a$ & $a$ & $0$ & $\frac{1}{2} c$ & $\frac{1}{2} u$ & $\frac{1}{2} v$\\
$b$ & $0$ & $b$ & $\frac{1}{2} c$ & $\frac{1}{2} u$ & $\frac{1}{2} v$\\
$c$ & $\frac{1}{2} c$ & $\frac{1}{2} c$ & $0$ & $\frac{1}{2} (a+b)$ & $0$\\
$u$ & $\frac{1}{2} u$ & $\frac{1}{2} u$ & $\frac{1}{2} (a+b)$ & $0$ & $0$\\
$v$ & $\frac{1}{2} v$ & $\frac{1}{2} v$ & $0$ & $0$ & $0$\\
\end{tabular}
\end{center}
$J_{7}$ is a bicrossed product between the $3$-dimensional Jordan algebra $A\, =\, <a,\, b,\,c ~|~ a^{2} = a,\, b^{2} = b,\, c^{2} = 0,\, ab=0,\, ac = \frac{1}{2}c,\, bc= \frac{1}{2}c>$ and the abelian $2$-dimensional Jordan algebra $V$ generated by $u$ and $v$. The actions $(\triangleleft, \, \triangleright)$ of the associated matched pair are defined as follows:
\begin{eqnarray*}
&& u \triangleleft a = u \triangleleft b = \frac{1}{2} u,\,\,\, v \triangleleft a = v \triangleleft b =\frac{1}{2} v,\,\,\, u \triangleright c = \frac{1}{2}(a+b)
\end{eqnarray*}

$J_{17}$ is the $5$-dimensional real Jordan algebra with the following multiplication table:
\begin{center}
\begin{tabular} {l | c  c  c  c  c  c}
$J_{17}$ & $a$ & $b$ & $c$ & $u$ & $v$\\
\hline $a$ & $a$ & $b$ & $\frac{1}{2} c$ & $0$ & $\frac{1}{2} v$\\
$b$ & $b$ & $0$ & $0$ & $0$ & $0$\\
$c$ & $\frac{1}{2} c$ & $0$ & $0$ & $\frac{1}{2} c$ & $0$\\
$u$ & $0$ & $0$ & $\frac{1}{2} c$ & $u$ & $\frac{1}{2} v$\\
$v$ & $\frac{1}{2} v$ & $0$ & $0$ & $\frac{1}{2} v$ & $0$\\
\end{tabular}
\end{center}
$J_{17}$ is a bicrossed product between the Jordan algebras $A\, =\, <a,\, b,\,c ~|~ a^{2} = a,\, b^{2} = 0,\, c^{2} = 0,\, ab=b,\, ac = \frac{1}{2} c,\, bc= 0>$ and $V = <u,\, v ~|~ u^{2} = u,\, v^{2} = 0,\, uv = \frac{1}{2} v>$. The actions $(\triangleleft, \, \triangleright)$ of the associated matched pair are defined as follows:
$$
u \triangleright c =  \frac{1}{2} c,\,\,\,\, v \triangleleft a = \frac{1}{2} v.
$$
\end{examples}

The bicrossed product of two Jordan algebras is the construction
responsible for the \emph{factorization problem} for Jordan
algebras as formulated in the introduction. Indeed, we can prove the Jordan algebra version of
\cite[Theorem 3.9]{LW}:

\begin{theorem}\thlabel{bicromlver}
A Jordan algebra $E$ factorizes through two given Jordan algebras
$A$ and $V$ if and only if there exists a matched pair of Jordan
algebras $(A, V, \triangleleft, \, \triangleright)$ such that $E
\cong A \bowtie V $.
\end{theorem}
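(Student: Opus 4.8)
The plan is to prove both implications, the converse being the substantive one. For the easy direction, suppose $(A, V, \triangleleft, \triangleright)$ is a matched pair; by \thref{bicross} the bicrossed product $A \bowtie V$ is a Jordan algebra. I would identify $A$ with $A \times \{0\}$ and $V$ with $\{0\} \times V$: a glance at \equref{bracbicross} gives $(a,0)\circ(b,0) = (ab, 0)$ and $(0,x)\circ(0,y) = (0, xy)$, so these are Jordan subalgebras isomorphic to $A$ and $V$. Since $(a,x) = (a,0) + (0,x)$ and the two copies meet only in $(0,0)$, the algebra $A \bowtie V$ factorizes through $A$ and $V$; transporting this along an isomorphism $E \cong A \bowtie V$ settles the ``if'' part.

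For the converse, assume $E$ factorizes through $A$ and $V$, so that $E = A \oplus V$ as vector spaces with associated projections $\pi_A \colon E \to A$ and $\pi_V \colon E \to V$. I would define bilinear maps $\triangleright \colon V \times A \to A$ and $\triangleleft \colon V \times A \to V$ by decomposing the $E$-product $x \cdot a$ along this direct sum, setting $x \triangleright a := \pi_A(x\cdot a)$ and $x \triangleleft a := \pi_V(x \cdot a)$, so that $x \cdot a = (x \triangleright a) + (x \triangleleft a)$ with the first summand in $A$ and the second in $V$. Bilinearity is immediate from that of the multiplication and the linearity of the projections.

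The crux is to check that the linear isomorphism $\varphi \colon A \times V \to E$, $\varphi(a,x) := a + x$, carries the candidate multiplication \equref{bracbicross} to the multiplication of $E$. Expanding $\varphi(a,x)\cdot\varphi(b,y) = (a+x)(b+y) = ab + y\cdot a + x\cdot b + xy$ by commutativity and splitting the mixed products $y\cdot a$ and $x \cdot b$ through the two actions, the $A$-component becomes $ab + x \triangleright b + y \triangleright a$ and the $V$-component becomes $x \triangleleft b + y \triangleleft a + xy$; these are precisely the two coordinates of \equref{bracbicross}, so $\varphi(a,x)\cdot\varphi(b,y) = \varphi\bigl((a,x)\circ(b,y)\bigr)$. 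Hence the bracket \equref{bracbicross} is, via $\varphi$, the multiplication of $E$ transported to $A \times V$, and is therefore a Jordan algebra structure.

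From here \thref{bicross} supplies the rest: because \equref{bracbicross} defines a Jordan algebra structure, the quadruple $(A, V, \triangleleft, \triangleright)$ is automatically a matched pair, and $\varphi$ becomes a Jordan algebra isomorphism $A \bowtie V \cong E$. I expect no serious obstacle here; the only delicate point is the bookkeeping of which summand each term lands in, together with the uniqueness of the decomposition $E = A \oplus V$ used to read off the actions. The real gain of this route is that it bypasses any direct verification of the six compatibilities (MP1)--(MP6), which \thref{bicross} has already compressed into the single statement that \equref{bracbicross} be Jordan.
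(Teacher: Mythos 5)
Your proposal is correct and follows essentially the same route as the paper: both define the actions by splitting the mixed products $x\cdot a$ through the projection coming from the vector-space decomposition $E = A \oplus V$, transport the multiplication of $E$ to $A \times V$ along $\varphi(a,x) = a + x$ to see that it coincides with \equref{bracbicross}, and then let \thref{bicross} deliver the matched pair axioms without checking (MP1)--(MP6) directly. The paper writes $x \triangleleft a := xa - \pi_A(xa)$ where you write $\pi_V(x\cdot a)$, which is the same map, so there is no substantive difference.
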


\begin{proof}
First observe that $A \cong A\times \{0\}$ and $V \cong
\{0\}\times V$ are Jordan subalgebras of $A \bowtie V $ and obviously $A \bowtie V $ factorizes through $A\times \{0\}$ and
$\{0\}\times V$.

Conversely, assume that a Jordan algebra $E$ factorizes through two Jordan subalgebras $A$ and $V$. Consider $\pi_{A} \colon E \to A$ be the canonical projection, i.e. $\pi_{A}(a) = a$ for all $a \in A$ and ${\rm ker}\, \pi_{A} = V$. We can now define the following:
\begin{eqnarray}
\triangleright \colon V \times A \to
A, \qquad x \triangleright a &:=& \pi_{A} (x a ) = \pi_{A}(a x ) \eqlabel{bala1}\\
\triangleleft \colon V \times A \to V,
\qquad x \triangleleft a &:=& x a - \pi_{A} (x a) \eqlabel{bala2}
\end{eqnarray}
We will show that $\bigl(A,\, V,\, \triangleleft, \,
\triangleright \bigl)$ is a matched pair of Jordan algebras and $ \varphi: A \bowtie V \to
E$, $\varphi(a, x) := a + x$ is an isomorphism of Jordan algebras. To this end note that $\varphi$ is a linear isomorphism between $E$ and the direct product of vector spaces $A \times V$ with the inverse given by $\varphi^{-1}(y) := \bigl(\pi_{A}(y), \, y -
\pi_{A}(y)\bigl)$, for all $y \in E$. Therefore, there exists a unique Jordan
algebra structure on $A \times V$ such that $\varphi$ becomes an
isomorphism of Jordan algebras and this unique multiplication on
$A \times V$ is given for any $a$, $b \in A$ and $x$, $y\in V$ by:
$$
(a, x) \circ (b, y) := \varphi^{-1} \bigl( \varphi(a, x) \,
 \varphi(b, y) \bigl)
$$
The following straightforward computation shows that the above multiplication indeed coincides with the one associated to the matched pair $\bigl(A,\, V,\, \triangleleft, \,
\triangleright \bigl)$ as defined in \equref{bracbicross}:
\begin{eqnarray*}
(a, x) \circ (b, y) &=& \varphi^{-1} \bigl(\varphi(a, x) \,  \varphi(b, y)\bigl) = \varphi^{-1} \bigl( (a+x) (b+y) \bigl) \\
&=& \varphi^{-1} (a b + a y + x b + x
y) \\
&=& \bigl( \pi_{A}(a b) + \pi_{A}(a y ) + \pi_{A} (x b) +
\pi_{A}(x y), \, \\
&& a b + a y + x b + x y - \pi_{A}(a b) - \pi_{A}(a y )- \pi_{A} (x b) -
\pi_{A}(x y)  \bigl)\\
&=& \Bigl(a b + x \triangleright b + y \triangleright a, \,\,  x \triangleleft b + y \triangleleft a + x\, y \Bigl)
\end{eqnarray*}
\end{proof}

Based on \thref{bicromlver} we can restate the factorization
problem as follows:

\emph{Let $A$ and $V$ be two given Jordan algebras. Describe the
set of all matched pairs $(A, V, \, \triangleleft, \,
\triangleright)$ and classify up to an isomorphism all bicrossed
products $A \bowtie V$}.

In this way, the factorization problem can be divided into two
parts: the first one is a computational one which requires the
explicit description of all matched pairs $(\triangleleft, \,
\triangleright)$ between Jordan algebras $A$ and $V$ while the
second one deals with the classification of all bicrossed products
$A \bowtie V$ associated to the matched pairs previously
described. There is no general strategy for approaching the first
part of the problem which needs to be treated 'case by case' for
all specific examples of Jordan algebras (in the spirit of
\cite{a, abm1}). The tool for approaching the second part of the
problem is the next result which can be also used to describe the
automorphisms group ${\rm Aut}_J ({A \bowtie V})$ of a given
bicrossed product. In light of \exref{mpbpex}, the classification of bicrossed products is an important step in the classification of all Jordan algebras of a given dimension.

\begin{theorem}\thlabel{morfismebicross}
Let $(A, V, \triangleleft, \triangleright)$ and $(A', V',
\triangleleft', \triangleright')$ be two matched pairs of Jordan
algebras. Then there exists a bijective correspondence between the
set of all morphisms of Jordan algebras $\psi : A \bowtie V \to A'
\bowtie ' V' $ and the set of all quadruples $(r, s, t, q)$, where
$r: A \to A'$, $s: A \to V'$, $t: V \rightarrow A'$, $q: V
\rightarrow V'$ are linear maps satisfying the following
compatibility conditions for any $a$, $b\in A$ and $x$, $y\in V$:
\begin{eqnarray}
r(a\, b) - r(a) \, r(b) &{=}&  s(a) \triangleright' r(b) + s(b) \triangleright' r(a) \eqlabel{C1}\\
s(a \ b) - s(a) \, s(b) &{=}& s(a) \triangleleft' r(b) + s(b) \triangleleft' r(a)  \eqlabel{C2}\\
t(x \, y) - t(x) \, t(y)  &{=}&  q(x) \triangleright' t(y) + q(y) \triangleright' t(x)   \eqlabel{C3}\\
q(x\, y) - q(x) \, q(y)  &{=}&  q(x)\triangleleft' t(y) + q(y) \triangleleft' t(x)  \eqlabel{C4}\\
r(x \triangleright a) + t (x \triangleleft a) &{=}& r(a) \, t(x) + s(a)  \triangleright' t(x) + q(x)  \triangleright' r(a) \eqlabel{C5}\\
s(x \triangleright a) + q (x \triangleleft a)  &{=}& s(a) \, q(x) + s(a)  \triangleleft' t(x) + q(x)
\triangleleft' r(a) \eqlabel{C6}
\end{eqnarray}
Under the above bijection the Jordan algebra map $ \psi =
\psi_{(r, s, t, q)} : A \bowtie V \to A' \bowtie ' V' $
corresponding to $(r, s, t, q)$ is given by:
\begin{equation}\eqlabel{morfbicros}
\psi \bigl((a, \, x)\bigl) = \bigl(r(a) + t(x) , \, s(a) + q(x)
)\bigl)
\end{equation}
for all $a \in A$ and $x\in V$.
\end{theorem}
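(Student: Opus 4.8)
The plan is to establish the correspondence at the purely linear level first, and only afterwards to isolate the condition that singles out the multiplicative maps. Since $A \bowtie V = A \times V$ and $A' \bowtie' V' = A' \times V'$ coincide with the underlying direct products as vector spaces, any linear map $\psi \colon A \bowtie V \to A' \bowtie' V'$ is completely determined by its restrictions to $A \times \{0\}$ and $\{0\} \times V$, post-composed with the two canonical projections onto $A'$ and $V'$. I would accordingly set $r := \pi_{A'} \circ \psi \circ i_A$, $s := \pi_{V'} \circ \psi \circ i_A$, $t := \pi_{A'} \circ \psi \circ i_V$ and $q := \pi_{V'} \circ \psi \circ i_V$, with $i_A, i_V$ the canonical inclusions. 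Writing $(a, x) = (a, 0) + (0, x)$ and using linearity recovers formula \equref{morfbicros} at once, and conversely every quadruple $(r, s, t, q)$ produces a unique linear $\psi$ through \equref{morfbicros}. This already yields a bijection between linear maps and quadruples, with no compatibility conditions yet imposed.

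The second step translates multiplicativity into \equref{C1}--\equref{C6}. Because the bicrossed product multiplication \equref{bracbicross} is bilinear and $\psi$ is linear, $\psi$ is a morphism of Jordan algebras if and only if $\psi(u \circ w) = \psi(u) \circ' \psi(w)$ holds for $u, w$ taken from the spanning set $\{(a, 0) : a \in A\} \cup \{(0, x) : x \in V\}$. This leaves exactly three cases, since the two mixed orderings coincide by commutativity of the Jordan product. I would compute each case explicitly: from $(a, 0) \circ (b, 0) = (ab, 0)$, comparing the $A'$- and $V'$-components of $\psi(ab, 0)$ with those of $(r(a), s(a)) \circ' (r(b), s(b))$ produces precisely \equref{C1} and \equref{C2}; the case $(0, x) \circ (0, y) = (0, xy)$ similarly produces \equref{C3} and \equref{C4}; and the mixed case $(0, x) \circ (a, 0) = (x \triangleright a, x \triangleleft a)$ produces \equref{C5} and \equref{C6} once commutativity of the products in $A'$ and $V'$ is used to rewrite $t(x)\, r(a)$ as $r(a)\, t(x)$ and $q(x)\, s(a)$ as $s(a)\, q(x)$.

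The only genuine work lies in the bookkeeping of these expansions: in the mixed case each side unpacks into several summands under $\triangleleft'$, $\triangleright'$ and the two products, and one must check that they regroup exactly into the right-hand sides of \equref{C5} and \equref{C6}. I expect this term-matching to be the main, though entirely routine, obstacle, with the commutativity of the Jordan multiplication being the key fact that both collapses the two mixed orderings into one and aligns the factors correctly. Once the three cases are verified, bilinearity guarantees that \equref{C1}--\equref{C6} are sufficient as well as necessary for full multiplicativity, so the linear-level bijection restricts to the asserted bijection between Jordan algebra morphisms and quadruples satisfying \equref{C1}--\equref{C6}, with $\psi_{(r,s,t,q)}$ given by \equref{morfbicros}.
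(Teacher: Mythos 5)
Your proposal is correct and follows essentially the same route as the paper's own proof: reduce to the linear-level bijection via the universal property of the direct product, then check multiplicativity only on the pairs $\bigl((a,0),(b,0)\bigr)$, $\bigl((0,x),(0,y)\bigr)$ and the mixed pair, which yield \equref{C1}--\equref{C2}, \equref{C3}--\equref{C4} and \equref{C5}--\equref{C6} respectively. Your added remarks on bilinearity justifying the spanning-set reduction and on commutativity collapsing the two mixed orderings are exactly the points the paper leaves implicit.
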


\begin{proof}
By the universal property of the direct product of vector spaces
we obtain that for any linear map $\psi : A \bowtie V \to A'
\bowtie ' V'$ there exists a unique quadruple $(r, s, t, q)$ of
linear maps $r: A \to A'$, $s: A \to V'$, $t: V \rightarrow A'$,
$q: V \rightarrow V'$ such that $\psi \bigl((a, \, x)\bigl) =
\bigl(r(a) + t(x) , \, s(a) + q(x) )\bigl)$, for all $a \in A$ and
$x\in V$. It remains to investigate when such a map $\psi =
\psi_{(r, s, t, q)}$ is a morphism of Jordan algebras between the
two bicrossed products, i.e. the following holds for all $(a, x),
(b, y) \in A\times V$:
\begin{equation}\eqlabel{morftest}
\psi \bigl( (a, \, x) \circ_{A \bowtie V} \, (b, \, y)  \bigl) =
\psi (a, x) \circ_{A' \bowtie ' V'} \, \psi (b, y)
\end{equation}
This is again a rather cumbersome but straightforward computation
which will be skipped. We only indicate the main strategy for the
proof: first, we can prove that \equref{morftest} holds for the
pairs $(a, 0)$ and $(b, 0)$ if and only if \equref{C1} and
\equref{C2} are fulfilled. Secondly, it can be shown that
\equref{morftest} holds for the pairs $(0, x)$ and $(0, y)$ if and
only if \equref{C3} and \equref{C4} hold. Finally,
\equref{morftest} holds for the pairs $(a, 0)$ and $(0, x)$ if and
only if \equref{C5} and \equref{C6} hold and this finishes the
proof.
\end{proof}

\begin{example}\exlabel{mpabelian}
Let $A_{0}$ be an abelian Jordan algebra and consider $k_0$ to be the abelian Jordan algebra of dimension $1$. Then 
there exists a bijection between the set of all matched pairs of Jordan algebras $(A_0, k_0, \triangleleft, \triangleright)$ and the set of 
all linear endomorphisms $D \in {\rm End}_k (A)$ such that $D^3 = 0$ and the matched pair $(\triangleleft, \triangleright)$ 
associated to $D$ is given as follows for all $\alpha \in k$ and $a\in A$: 
\begin{equation} \eqlabel{mpab1}
\alpha \triangleleft a := 0, \qquad \alpha \triangleright a := \alpha D(a)
\end{equation}
In particular, any bicrossed product $A_0 \bowtie k_0$ of Jordan algebras is a 
left semidirect product $A_0 \ltimes k_0$, which we denote by $A(D) := A \times k$ and whose multiplication is given 
for all $a$, $b\in A$ and $\alpha$, $\beta \in k$ by: 
$$
(a, \, \alpha) \circ (b, \, \beta) = (\alpha \, D(b) + \beta \, D(a), \, 0)
$$
Indeed, since $V = k$, there exists a bijection between the set of all bilinear maps $(\triangleleft, \triangleright)$, 
$\triangleleft : k \times A \to k$ and $\triangleright : k\times A \to A$ and the set of all linear maps 
$(\lambda, \, D) \in A^* \times {\rm End}_k (A)$ given such that 
$$
\alpha \triangleleft a := \alpha \lambda (a), \qquad \alpha \triangleright a := \alpha D(a)
$$
for all $\alpha \in k$, $a\in A$. Now since both $A$ and $k_0$ are abelian Jordan algebras, a laborious but straightforward computation shows that the axioms (MP1)$-$(MP6) from \deref{mpmL} are reduced to the following two conditions: $\lambda = 0$ and $D^3 (a) = 0$, for all $a\in A$. 
\end{example}

\section{Application: classifying complements for Jordan algebras}

Let $A \subseteq E$ be a Jordan subalgebra of $E$. A Jordan
subalgebra $B$ of $E$ is called a \emph{complement} of $A$ in $E$
(or an \emph{$A$-complement} of $E$) if $E$ factorizes through $A$
and $B$, i.e. $E = A + B$ and $A \cap B = \{0\}$.
\thref{bicromlver} implies that the Jordan algebra $B \cong
\{0\}\times B$ is a complement of $A \cong A\times \{0\}$ in the
bicrossed product $A \bowtie B$. Moreover, if $B$ is an 
$A$-complement of $E$, then there exists a matched pair of Jordan
algebras $(A, B, \triangleright, \triangleleft)$ such that the
corresponding bicrossed product $A \bowtie B$ is isomorphic as a
Jordan algebra with $E$. The actions of the matched pair $(A, B,
\triangleright, \triangleleft)$ arise from the unique
decomposition:
\begin{equation}\eqlabel{Lie456}
(0,\,b) \circ (a,\,0) = (b \triangleright a,\, b \triangleleft a)
\end{equation}
for all $a \in A$, $b \in B$. The matched pair constructed in
\equref{Lie456} will be called the \emph{canonical matched pair}
associated to the $A$-complement $B$ of $E$.

For a Jordan subalgebra $A$ of $E$ we denote by ${\mathcal F} (A,
\, E)$ the isomorphism classes of $A$-complements of $E$. The
\emph{factorization index} of $A$ in $E$ is defined as $[E : A]^f
:= |\, {\mathcal F} (A, \, E) \,|$.

\begin{definition} \delabel{deformaplie}
Let $(A, B, \triangleright, \triangleleft)$ be a matched pair of
Jordan algebras. A $k$-linear map $r: B \to A$ is called a
\emph{deformation map} of the matched pair $(A, B, \triangleright,
\triangleleft)$ if the following compatibility holds for all $x$,
$y \in B$:
\begin{equation}\eqlabel{factLie}
r(x y) \, - \, r(x) r(y) = x \triangleright
r(y) + y \triangleright r(x) - r \bigl( x \triangleleft r(y) + y
\triangleleft r(x) \bigl)
\end{equation}
We denote by ${\mathcal D}{\mathcal M} \, (B, A\, | \,
(\triangleright, \triangleleft) )$ the set of all deformation maps
of the matched pair $(A, B, \triangleright, \triangleleft)$. 
\end{definition}

\begin{example}\exlabel{defmap}
Consider $J$ to be the $4$-dimensional real Jordan algebra with multiplication table defined as follows:
\begin{center}
\begin{tabular} {l | c  c  c  c  c}
$J$ & $a$ & $b$ & $u$ & $v$\\
\hline $a$ & $a$ & $0$ & $0$ & $\frac{1}{2}v$\\
$b$ & $0$ & $b$ & $0$ & $\frac{1}{2}v$ \\
$u$ & $0$ & $0$ & $u$ & $0$\\
$v$ & $\frac{1}{2}v$ & $\frac{1}{2}v$ & $0$ & $0$ \\
\end{tabular}
\end{center}
$J$ is a semidirect product between the $2$-dimensional Jordan algebras $A \,=\, <a,\, b ~|~ a^{2} = a,\, b^{2} = b, \, ab =0>$ and $V = <u,\, v ~|~ u^{2} = u,\, v^{2} = 0,\, uv = 0>$. Furthermore, the associated matched pair $(\triangleleft, \, \triangleright)$ consists of the trivial left action $\triangleright \colon V \times A \to A$ and the right action $\triangleleft \colon V \times A \to V$ defined as follows:
$$
v \triangleleft a = v \triangleleft b = \frac{1}{2} v.
$$
It can be easily seen by a straightforward computation that the deformation maps $r \colon V \to A$ associated to this matched pair are given as follows for some $\alpha \in \RR$:
\begin{eqnarray}
&& r(u) = 0,\,\,\, r(v) = \alpha\, b \eqlabel{def1}\\
&& r(u) = 0,\,\,\, r(v) = \alpha\, a \eqlabel{def2}\\
&& r(u) = a+b,\,\,\, r(v) = \alpha\, b \eqlabel{def3}\\
&& r(u) = a+b,\,\,\, r(v) = \alpha\, a \eqlabel{def4}\\
&& r(u) = a,\,\,\, r(v) = 0 \eqlabel{def5}\\
&& r(u) = b,\,\,\, r(v) = 0 \eqlabel{def6}
\end{eqnarray}
\end{example}

Using this concept the following deformation of a Jordan algebra is
proposed:

\begin{theorem}\thlabel{deforJ}
Let $A$ be a Jordan subalgebra of $E$, $B$ a given $A$-complement
of $E$ and $r\colon B \to A$ a deformation map of the associated
canonical matched pair $(A, B, \triangleright, \triangleleft)$.

$(1)$ Let $f_{r}: B \to E = A \bowtie B$ be the $k$-linear map
defined for all $x \in B$ by:
$$f_{r}(x) = \bigl(r(x),\, x\bigl)$$
Then $\widetilde{B} : = {\rm Im}(f_{r})$ is an $A$-complement of
$E$.

$(2)$ $B_{r} := B$, as a $k$-vector space, with the new multiplication defined for all $x$, $y \in B$ by:
\begin{equation}\eqlabel{rLiedef}
x\, \cdot_{r} \, y := x y + x \triangleleft r(y) + y
\triangleleft r(x)
\end{equation}
is a Jordan algebra called the \emph{$r$-deformation} of $B$.
Furthermore, $B_{r} \cong \widetilde{B}$, as Jordan algebras.
\end{theorem}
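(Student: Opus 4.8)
The plan is to carry out the entire argument inside the bicrossed product $E = A \bowtie B$ and to observe that a single computation delivers both parts at once. First I would compute $f_r(x)\circ f_r(y)$ using the bicrossed product multiplication \equref{bracbicross} applied to the canonical matched pair $(A, B, \triangleright, \triangleleft)$:
\begin{equation*}
f_r(x)\circ f_r(y) = (r(x),\,x)\circ(r(y),\,y) = \bigl( r(x)\,r(y) + x\triangleright r(y) + y\triangleright r(x),\; x\triangleleft r(y) + y\triangleleft r(x) + x y \bigr).
\end{equation*}
The second coordinate is exactly $x\cdot_r y$ as defined in \equref{rLiedef}, so this product lies in $\widetilde{B} = {\rm Im}(f_r)$ precisely when its first coordinate equals $r(x\cdot_r y)$. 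Expanding $r(x\cdot_r y)$ by linearity of $r$ and matching it against the first coordinate, the required identity is seen to be nothing but the deformation map condition \equref{factLie}. Since $r$ is a deformation map, it follows that $f_r(x)\circ f_r(y) = (r(x\cdot_r y),\, x\cdot_r y) = f_r(x\cdot_r y)$, so $\widetilde{B}$ is closed under the multiplication of $E$ and hence is a Jordan subalgebra.

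To complete part $(1)$ I would check the two complement conditions. As $f_r$ is injective (its second component is the identity of $B$), an element $(r(x),\,x)\in\widetilde{B}$ can belong to $A\cong A\times\{0\}$ only when $x=0$, whence $A\cap\widetilde{B} = \{0\}$. On the other hand every $(a,\,x)\in E$ splits as $(a,\,x) = (a - r(x),\,0) + (r(x),\,x)$ with the first summand in $A$ and the second in $\widetilde{B}$, so $E = A + \widetilde{B}$. Therefore $\widetilde{B}$ is an $A$-complement of $E$.

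For part $(2)$ I would deliberately avoid verifying the Jordan axioms for $\cdot_r$ by hand and instead transport the structure along $f_r$. The map $f_r\colon B\to\widetilde{B}$ is a linear isomorphism onto the Jordan subalgebra $\widetilde{B}$, and the computation above shows $f_r(x\cdot_r y) = f_r(x)\circ f_r(y)$ for all $x,\,y\in B$; equivalently $x\cdot_r y = f_r^{-1}\bigl(f_r(x)\circ f_r(y)\bigr)$. Thus $\cdot_r$ is precisely the multiplication on $B$ pulled back from $\widetilde{B}$ through the linear isomorphism $f_r$. Since $\widetilde{B}$ is a genuine Jordan algebra, the pulled-back structure $B_r = (B,\,\cdot_r)$ is automatically a Jordan algebra and $f_r$ becomes an isomorphism $B_r\cong\widetilde{B}$, as claimed.

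The only point carrying real content is the first computation: the whole proof reduces to matching one vector identity, and that identity is by construction the deformation condition \equref{factLie}. I expect no genuine computational obstacle beyond this matching, precisely because the complement conditions are formal and the otherwise lengthy Jordan identity for $\cdot_r$ is obtained for free by transport from $\widetilde{B}$ rather than by direct verification. This is exactly what makes \deref{deformaplie} the correct notion of deformation map.
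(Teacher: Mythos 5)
Your proposal is correct and follows essentially the same route as the paper: the single computation of $(r(x),x)\circ(r(y),y)$ via \equref{bracbicross}, rewritten using the deformation condition \equref{factLie}, gives closure of $\widetilde{B}$, the complement conditions are checked by the same splitting $(a,x)=(a-r(x),0)+(r(x),x)$, and part $(2)$ is obtained by transporting the Jordan structure along $f_r$ exactly as the paper does with $\widetilde{f_r}$. The only (cosmetic) difference is that you run the key computation once and extract both parts from it, whereas the paper performs it in part $(1)$ and then essentially reverses it in part $(2)$.
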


\begin{proof}
$(1)$ To start with, we will prove that $\widetilde{B} =
\{\bigl(r(x),\, x \bigl) ~|~ x \in B\}$ is a Jordan subalgebra of
$E = A \bowtie B$. Indeed, for all $x$, $y \in B$ we have:
\begin{eqnarray*}
\bigl(r(x),\, x\bigl) \circ \bigl(r(y),\, y\bigl)
&\stackrel{\equref{bracbicross}}{=}& \bigl(\underline{r(x) r(y) + x \triangleright r(y) + y \triangleright
r(x)},\,\,x \triangleleft r(y) + y \triangleleft
r(x) + x y\bigl)\\
&\stackrel{\equref{factLie}}{=}& \Bigl( r\bigl(x \triangleleft
r(y) + y \triangleleft r(x) + x y\bigl),\, x
\triangleleft r(y) + y \triangleleft r(x) + x y\Bigl)
\end{eqnarray*}
and the latter term obviously belongs to $\widetilde{B}$. Moreover, it is straightforward to see that $A \cap \widetilde{B}
= \{0\}$ and $(a,\, b) = \bigl(a - r(b),\,0\bigl)+\bigl(r(b),\,b
\bigl) \in A + \widetilde{B}$. Therefore, $\widetilde{B}$ is a
$A$-complement of $E$.

$(2)$ We denote by $\widetilde{f_{r}}$ the $k$-linear isomorphism
from $B$ to $\widetilde{B}$ induced by $f_{r}$. We will prove that
$\widetilde{f_{r}}$ is also a Jordan algebra map if we consider on
$B=B_r$ the multiplication given by \equref{rLiedef}. Indeed, for any
$x$, $y \in B$ we have:
\begin{eqnarray*}
\widetilde{f_{r}}\bigl(x\, \cdot_{r} \,
y\bigl)&\stackrel{\equref{rLiedef}}{=}& \widetilde{f_{r}}\bigl(x y + x \triangleleft r(y) + y \triangleleft
r(x)\bigl)\\
&{=}& \bigl(\underline{r(x y + x \triangleleft r(y) + y
\triangleleft r(x))},\, x y + x \triangleleft r(y) + y
\triangleleft
r(x)\bigl)\\
&\stackrel{\equref{factLie}}{=}& \bigl(r(x) r(y) + x
\triangleright r(y) + y \triangleright r(x),\, x y + x
\triangleleft r(y) + y \triangleleft
r(x)\bigl)\\
&\stackrel{\equref{bracbicross}}{=}& \bigl(r(x),\,x\bigl) \circ
\bigl( r(y),\, y\bigl) = \widetilde{f_{r}}(x)\, \circ
\widetilde{f_{r}}(y)
\end{eqnarray*}
This shows that $B_{r}$ is a Jordan algebra and the proof is
now finished.
\end{proof}

We are now able to describe all complements of a Jordan subalgebra
$A$ of $E$.

\begin{theorem} \thlabel{descrierecomlie}
Let $A$ be a Jordan subalgebra of $E$, $B$ a given $A$-complement
of $E$ with the associated canonical matched pair of Jordan
algebras $(A, B, \triangleright, \triangleleft)$. Then
$\overline{B}$ is an $A$-complement of $E$ if and only if there
exists an isomorphism of Jordan algebras $\overline{B} \cong
B_{r}$, for some deformation map $r \colon B \to A$ of the matched
pair $(A, B, \triangleright, \triangleleft)$.
\end{theorem}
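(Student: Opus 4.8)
The plan is to establish the two implications of \thref{descrierecomlie} separately, exploiting the machinery already built in \thref{deforJ}. The ``if'' direction is essentially free: given a deformation map $r \colon B \to A$, part $(1)$ of \thref{deforJ} shows that $\widetilde{B} = \im(f_r)$ is an $A$-complement of $E$, and part $(2)$ gives $B_r \cong \widetilde{B}$ as Jordan algebras. Hence if $\overline{B} \cong B_r$ as Jordan algebras, then $\overline{B}$ is isomorphic to an actual $A$-complement; but being an $A$-complement is a property that transfers along Jordan algebra isomorphisms fixing $A$ appropriately, so $\overline{B}$ is itself (isomorphic to) an $A$-complement of $E$. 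I would state this cleanly and move on, since the substance lives in the converse.

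\medskip

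For the ``only if'' direction, suppose $\overline{B}$ is an arbitrary $A$-complement of $E$. The key idea is to use the \emph{given} complement $B$ as a reference frame and read off a deformation map from the factorization $E = A \oplus B$. Concretely, since $E = A \oplus \overline{B}$ as vector spaces and also $E = A \oplus B$, each element $\overline{b} \in \overline{B}$ decomposes uniquely as $\overline{b} = a_{\overline{b}} + b$ with $a_{\overline{b}} \in A$ and $b \in B$; this sets up a linear isomorphism $\pi_B|_{\overline{B}} \colon \overline{B} \to B$ (the restriction to $\overline{B}$ of the projection onto $B$ along $A$), which is bijective precisely because $A \cap \overline{B} = \{0\}$ and dimensions match via $E = A \oplus \overline{B} = A \oplus B$. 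I would then \textbf{define} $r \colon B \to A$ by transporting the ``$A$-part'' back through this isomorphism: for $x \in B$, let $r(x) := \pi_A(\theta(x))$ where $\theta := (\pi_B|_{\overline{B}})^{-1} \colon B \to \overline{B}$, so that $\theta(x) = r(x) + x$ lies in $\overline{B}$. In the language of the bicrossed product $E = A \bowtie B$, this says $\theta(x) = (r(x),\, x)$, i.e. $\overline{B} = \{(r(x),\,x) \mid x \in B\} = \im(f_r)$.

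\medskip

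The crux is then to verify that this $r$ is genuinely a deformation map, i.e. satisfies \equref{factLie}. This is where the computation bites: because $\overline{B}$ is a Jordan \emph{subalgebra}, the product $(r(x),\,x) \circ (r(y),\,y)$ computed via \equref{bracbicross} must again lie in $\im(f_r)$, meaning its $A$-component and $B$-component must be related through $r$. Spelling this out, the $B$-component is $x\triangleleft r(y) + y\triangleleft r(x) + xy$ and the $A$-component is $r(x)r(y) + x\triangleright r(y) + y\triangleright r(x)$; closure forces the $A$-component to equal $r$ applied to the $B$-component, which is \emph{exactly} \equref{factLie} after rearranging. So closure of $\overline{B}$ under multiplication is equivalent to $r$ being a deformation map — this is the main obstacle, and it is really the reverse of the reading already performed in the proof of \thref{deforJ}$(1)$. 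Finally, with $r$ in hand, \thref{deforJ}$(2)$ yields $B_r \cong \widetilde{B} = \im(f_r) = \overline{B}$ as Jordan algebras, completing the converse. I would close by noting that the two directions together give the stated equivalence.
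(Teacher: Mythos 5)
Your proof is correct and is essentially the paper's own argument: your map $\theta = (\pi_B|_{\overline{B}})^{-1}$ and your $r = \pi_A \circ \theta$ coincide with the paper's mutually inverse component maps $v, w$ and its $r := -u$, your observation that closure of $\overline{B} = \im (f_r)$ under the multiplication \equref{bracbicross} is precisely \equref{factLie} is the paper's central computation, and invoking \thref{deforJ}$(2)$ for $B_r \cong \im (f_r) = \overline{B}$ merely streamlines the paper's explicit verification that $v \colon B_r \to \overline{B}$ is a Jordan algebra map. The one repair you should make: do not justify surjectivity of $\pi_B|_{\overline{B}}$ by matching dimensions (the paper never assumes finite dimension); instead, for $x \in B$ write $x = a + \overline{b}$ along $E = A \oplus \overline{B}$ and note $\pi_B(\overline{b}) = x$, which is in effect the paper's remark that $v$ and $w$ are inverses of each other.
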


\begin{proof}
Let $\overline{B}$ be an arbitrary $A$-complement of $E$. Since $E
= A \bowtie B = A \bowtie \overline{B}$ we can find four
$k$-linear maps:
$$
u: B \to A, \quad v: B \to \overline{B}, \quad t:\overline{B} \to
A, \quad w: \overline{B} \to B
$$
such that for all $x \in B$ and $y \in \overline{B}$ we have:
\begin{equation} \eqlabel{lie111}
(0,\, x) = \bigl(u(x),\, v(x)\bigl), \qquad (0,\,y) =
\bigl(t(y),\, w(y)\bigl)
\end{equation}
It can be easily proved that $v$ and $w$ are inverses to each
other and, in particular, that $v: B \to \overline{B}$ is a
$k$-linear isomorphism. We denote by $\tilde{v}: B \to A \bowtie
B$ the composition:
$$
\tilde{v} : \, B \, \stackrel{v} {\longrightarrow} \, \overline{B}
\, \stackrel{i}{\hookrightarrow} \, E \, = \,A \bowtie B
$$
More precisely, we have $(0,\,\tilde{v}(x)) = (0,\,v(x))
\stackrel{\equref{lie111}}{=} (-u(x),\, x)$, for all $x \in B$. Consider now $r := -u$; we will prove that $r$ is a deformation map
and $\overline{B} \cong B_{r}$. Indeed, $\overline{B} = {\rm Im}
(v) = {\rm Im} (\tilde{v})$ is a Jordan subalgebra of $E = A
\bowtie B$ and therefore we have:
\begin{eqnarray*}
\bigl(r(x),\, x\bigl) \circ \bigl(r(y),\, y\bigl)
&\stackrel{\equref{bracbicross}}{=}& \bigl(r(x) r(y) + x
\triangleright r(y) + y \triangleright r(x),\,\,x \triangleleft
r(y) + y \triangleleft r(x) + x y\bigl)\\
&=& (r(z),\, z)
\end{eqnarray*}
for some $z \in B$. Thus, we obtain:
\begin{equation}\eqlabel{lie113}
r(z) = r(x)\, r(y) + x \triangleright r(y) + y
\triangleright r(x), \qquad z = x \triangleleft r(y) + y
\triangleleft r(x) + x y
\end{equation}

Now by applying $r$ to the second part of \equref{lie113} we
obtain:
\begin{eqnarray*}
r(z) = r\bigl(x \triangleleft r(y) + y \triangleleft r(x) + x y\bigl)
\end{eqnarray*}
which combined with the first part of \equref{lie113} shows that
$r$ is a deformation map of the matched pair $(A, B,
\triangleright, \triangleleft)$. Moreover, for all $x$, $y \in B$
we have:
\begin{eqnarray*}
&& \bigl(0,\, v(x) v(y)\bigl) = \bigl(0,\, v(x)\bigl)
\circ \bigl(0,\,v(y)\bigl) = \bigl(r(x),\, x\bigl)\circ
\bigl(r(y),\,
y\bigl)= \bigl(r(z),\, z\bigl)\\
&=& \bigl(0,\, v(z)\bigl) = \bigl(0,\, v(x \triangleleft r(y) + y
\triangleleft r(x) + x y)\bigl) = \bigl(0,\, v(x
\cdot_{r} y)\bigl)
\end{eqnarray*}
that is, $v: B_{r} \to \overline{B}$ is a Jordan algebra map and
the proof is now finished.
\end{proof}

In order to classify all complements we need to introduce the
following:

\begin{definition}\delabel{equivLie}
Let $(A, B, \triangleright, \triangleleft)$ be a matched pair of
Jordan algebras. Two deformation maps $r$, $s: B \to A$ are called
\emph{equivalent} and we denote this by $r \sim s$ if there exists
$\sigma: B \to B$ a $k$-linear automorphism of $B$ such that for
all $x$, $y\in B$ we have:
\begin{equation}\eqlabel{equivLiemaps}
\sigma (x y) - \sigma(x)\, \sigma(y) = \sigma(x)
\triangleleft s \bigl(\sigma(y)\bigl) - \sigma\bigl(x
\triangleleft r(y)\bigl) + \sigma(y) \triangleleft s
\bigl(\sigma(x)\bigl) - \sigma \bigl(y \triangleleft r(x)\bigl)
\end{equation}
\end{definition}

We conclude this section with the following classification result
for complements of Jordan algebras:

\begin{theorem}\thlabel{clasformelorLie}
Let $A$ be a Jordan subalgebra of $E$, $B$ an $A$-complement of $E$
and $(A, B, \triangleright, \triangleleft)$ the associated
canonical matched pair. Then:

$(1)$ $\sim$ is an equivalence relation on ${\mathcal D}{\mathcal
M} \, (B, A \, | \, (\triangleright, \triangleleft) )$. We denote
by  ${\mathcal H}{\mathcal A}^{2} (B, A \, | \, (\triangleright,
\triangleleft) )$ the quotient set ${\mathcal D}{\mathcal M} \,
(B, A \, | \, (\triangleright, \triangleleft) )/ \sim$.

$(2)$ There exists a bijection between the isomorphism classes of
all $A$-complements of $E$ and ${\mathcal H}{\mathcal A}^{2} (B, A
\, | \, (\triangleright, \triangleleft) )$. In particular, the
factorization index of $A$ in $E$ is computed by the formula:
$$
[E : A]^f = | {\mathcal H}{\mathcal A}^{2} (B, A \, | \,
(\triangleright, \triangleleft) )|
$$
where $|X|$ denotes the cardinal of the set $X$.
\end{theorem}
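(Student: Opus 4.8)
The plan is to reduce the entire statement to one observation: the defining identity \equref{equivLiemaps} for $r \sim s$ says precisely that a $k$-linear automorphism $\sigma$ of $B$ is a morphism of Jordan algebras from $B_{r}$ to $B_{s}$, where both sides carry the deformed multiplication \equref{rLiedef}. First I would verify this by expanding $\sigma(x \cdot_{r} y) = \sigma(x) \cdot_{s} \sigma(y)$: substituting $x \cdot_{r} y = xy + x \triangleleft r(y) + y \triangleleft r(x)$ on the left and $\sigma(x) \cdot_{s} \sigma(y) = \sigma(x)\sigma(y) + \sigma(x) \triangleleft s(\sigma(y)) + \sigma(y) \triangleleft s(\sigma(x))$ on the right, the terms $\sigma(xy)$ and $\sigma(x)\sigma(y)$ collect together and the remaining $\triangleleft$-terms are exactly the two sides of \equref{equivLiemaps}. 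Since $B_{r}$, $B_{s}$ and $B$ all have the same underlying vector space, a linear bijection $\sigma$ satisfying this is automatically a Jordan algebra isomorphism (its inverse is linear and multiplicative as well), and conversely every Jordan isomorphism $B_{r} \to B_{s}$ is such a $\sigma$. In short, $r \sim s$ if and only if $B_{r} \cong B_{s}$ as Jordan algebras.

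With this dictionary, part $(1)$ is immediate and I would avoid computing with \equref{equivLiemaps} directly. Reflexivity follows from $\sigma = {\rm id}_{B}$; symmetry from the fact that $\sigma^{-1} \colon B_{s} \to B_{r}$ is again an isomorphism; transitivity from composing isomorphisms. Thus $\sim$ is the restriction to ${\mathcal D}{\mathcal M}(B, A \, | \, (\triangleright, \triangleleft))$ of the (manifestly reflexive, symmetric and transitive) relation ``$B_{r} \cong B_{s}$'', and hence an equivalence relation.

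For part $(2)$ I would build the bijection on the forward direction. By \thref{deforJ}, sending a deformation map $r$ to the isomorphism class of $\widetilde{B} = {\rm Im}(f_{r}) \cong B_{r}$ defines a map $\Phi \colon {\mathcal D}{\mathcal M}(B, A \, | \, (\triangleright, \triangleleft)) \to {\mathcal F}(A, \, E)$. The dictionary shows $\Phi$ is constant on $\sim$-classes, so it descends to a well-defined map $\overline{\Phi} \colon {\mathcal H}{\mathcal A}^{2}(B, A \, | \, (\triangleright, \triangleleft)) \to {\mathcal F}(A, \, E)$. Injectivity of $\overline{\Phi}$ is again the dictionary: if $B_{r} \cong B_{s}$, the isomorphism witnesses $r \sim s$. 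Surjectivity is exactly \thref{descrierecomlie}, which guarantees that every $A$-complement $\overline{B}$ satisfies $\overline{B} \cong B_{r}$ for some deformation map $r$. Taking cardinalities and recalling $[E : A]^{f} = |{\mathcal F}(A, \, E)|$ by definition then yields the index formula.

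The argument is therefore soft given the earlier results, and the only genuine content is the expansion in the first paragraph. The point I would be most careful about is that the isomorphism furnished by \thref{descrierecomlie} really can be read as a $k$-linear automorphism of the single space $B$ — this is what lets it serve as the $\sigma$ of \deref{equivLie} — and this is legitimate precisely because $B_{r}$ and $B_{s}$ are two Jordan structures on the \emph{same} vector space $B$, so no identification of distinct spaces is being smuggled in.
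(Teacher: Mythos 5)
Your proposal is correct and follows essentially the same route as the paper: both rest on the observation that \equref{equivLiemaps} is exactly the statement that $\sigma$ is a Jordan algebra map $B_r \to B_s$ (so $r \sim s$ if and only if $B_r \cong B_s$), from which part $(1)$ is formal, and both obtain surjectivity of the induced map to ${\mathcal F}(A,E)$ from \thref{descrierecomlie}. Your write-up merely makes explicit the details (the expansion of $\sigma(x \cdot_r y) = \sigma(x) \cdot_s \sigma(y)$, well-definedness and injectivity of the descended map) that the paper's terser proof leaves to the reader.
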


\begin{proof}
In light of \thref{descrierecomlie}, in order to classify all
$A$-complements of $E$ it suffices to consider only
$r$-deformations of $B$, for various deformation maps $r : B \to
A$. Now let $r$, $s : B \to A$ be two deformation maps. Since $B_r =
B_s : = B$ as $k$-spaces, it follows that the Jordan algebras $B_r$
and $B_s$ are isomorphic if and only if there exists a $k$-linear
isomorphism $\sigma : B_{r} \to B_{s}$ which is also a Jordan
algebra map. It can be easily seen using \equref{rLiedef} that
$\sigma$ is a Jordan algebra map if and only if the compatibility
condition \equref{equivLiemaps} holds, i.e. $r \sim s$. Hence, $r
\sim s$ if and only if $\sigma: B_r \to B_s$ is an isomorphism of
Jordan algebras. To conclude, $\sim$ is an equivalence relation on
${\mathcal D}{\mathcal M} (B, A \, | \, (\triangleright,
\triangleleft) )$ and the map
$$
{\mathcal H}{\mathcal A}^{2} (B,\,A \, | \, (\triangleright,
\triangleleft) ) \to {\mathcal F} (A, E),  \qquad \overline{r}
\mapsto B_{r}
$$
is a bijection between sets, where $\overline{r}$ is the
equivalence class of $r$ via the relation $\sim$.
\end{proof}

We end this section with an example which computes the factorization index of a certain Jordan algebra extension.

\begin{example}\exlabel{compl}
Consider again $(A,\,V,\, \triangleright, \triangleleft)$ to be the matched pair depicted in \exref{defmap}. We will prove that the factorization index $[J : A]^f = 4$, where $J = A \bowtie V$. We start by describing the Jordan algebras $V_{r}$ corresponding to the deformation maps listed in \exref{defmap}. First note that the Jordan algebras $V_{r}$ corresponding to the $r$-deformations given in \equref{def1} and \equref{def2} are equal and defined by $V_{r} \,=\, \{u,\,v ~|~ u^{2} = u, \, v^{2} = \alpha v,\, uv=0\}$ for some $\alpha \in \RR$. If $\alpha = 0$ we obtain the Jordan algebra $V$ while if $\alpha \neq 0$ we have an isomorphism between $V_{r}$ and the Jordan algebra $V_{1} = \{u,\, v ~|~ u^{2} = u,\, v^{2} = 0,\, uv=0\}$. Furthermore, it can be easily seen that $V$ is not isomorphic to $V_{1}$. Next, the Jordan algebras $V_{r}$ corresponding to the $r$-deformations given in \equref{def3} and \equref{def4} are equal and defined by $V_{r} \,=\, \{u,\,v ~|~ u^{2} = u, \, v^{2} = \beta v,\, uv=v\}$ for some $\beta \in \RR$. If $\beta = 0$ we obtain the Jordan algebra $V_{2} = \{u,\, v ~|~ u^{2} = u,\, v^{2} = 0,\, uv=v\}$ while if $\beta \neq 0$ we have an isomorphism between $V_{r}$ and the Jordan algebra $V_{2}' = \{u,\, v ~|~ u^{2} = u,\, v^{2} = v,\, uv=v\}$. It can be easily seen that $V_{2}'$ is isomorphic to $V_{1}$ and that $V_{2}$ is not isomorphic to $V$ nor to $V_{1}$. Similarly,  the Jordan algebras $V_{r}$ corresponding to the $r$-deformations given in \equref{def5} and \equref{def6} are equal and defined by $V_{3} = \{u,\, v ~|~ u^{2} = u,\, v^{2} = 0,\, uv= \frac{1}{2}v\}$. As $V_{3}$ is not isomorphic to any of the Jordan algebras $V$, $V_{1}$ or $V_{2}$ we can conclude that $[J : A]^f = 4$, as desired.
\end{example}

\end{document}